\newcommand{\Eset}{\mathbb{E}}
\newcommand{\Rset}{\mathbb{R}}
\newcommand{\Ocal}{{\cal O}}
\newcommand{\Qcal}{{\cal Q}}
\newcommand{\zhat}{{\hat{z}}}
\newcommand{\xhat}{{\hat{x}}}
\newcommand{\yhat}{{\hat{y}}}
\newtheorem{lem}{Lemma}
\newtheorem{thm}{Theorem}
\newtheorem{assump}{Assumption}
\newtheorem{remark}{Remark}
\date{}
\title{\LARGE Nonlinear Two-Time-Scale Stochastic Approximation\\ \LARGE Convergence and Finite-Time Performance}
\author{
Thinh T. Doan\thanks{Thinh T. Doan is with the Bradley Department of Electrical and Computer Engineering, Virginia Tech, USA. Email: {\tt\small thinhdoan@vt.edu}}
}
\begin{document}

\maketitle

\begin{abstract}
Two-time-scale stochastic approximation, a generalized version of the popular stochastic approximation, has found broad applications in many areas including stochastic control, optimization, and machine learning. Despite its popularity, theoretical guarantees of this method, especially its finite-time performance, are mostly achieved for the linear case while the results for the nonlinear counterpart are very sparse. Motivated by the classic control theory for singularly perturbed systems, we study in this paper the asymptotic convergence and finite-time analysis of the nonlinear two-time-scale stochastic approximation. Under some fairly standard assumptions, we provide a formula that explicitly characterizes the rate of convergence of the main iterates to the desired solutions. In particular, we show that the mean square error generated by the method convergences to zero at a rate $\Ocal(1/k^{2/3})$, where $k$ is the number of iterations. The key idea in our analysis is to properly choose the two step sizes to  characterize the coupling between the fast and slow-time-scale iterates.             
\end{abstract}


\section{Nonlinear two-time-scale stochastic approximation}\label{sec:nonlinear_SA}
Stochastic approximation ({SA}), introduced by \cite{RobbinsM1951}, is a simulation-based approach for finding the root (or fixed point) of some unknown operator $F$ represented by the form of an expectation, i.e., $F(x) = \Eset_{\pi}[F(x,\xi)]$, where $\xi$ is some random variable with a distribution $\pi$. Specifically, this method seeks a point $x^{\star}$ such that $F(x^{\star}) = 0$ based on the noisy observations $F(x;\xi)$. The iterate $x$ is iteratively updated by moving along the direction of $F(x;\xi)$ scaled by some step size. Through a careful choice of this step size, the ``noise'' induced by the random samples $\xi$ can be averaged out across iterations, and the algorithm converges to $x^*$.  {SA} has found broad applications in many areas including statistics, stochastic optimization, machine learning, and reinforcement learning \cite{BTbook1999,borkar2008,TTFBook2009,SBbook2018,LanBook2020}.      

In this paper, we consider the so-called two-time-scale SA, a generalized variant of the classic SA, which is used to find the root of a system of two coupled nonlinear equations. Given two unknown operators $F:\Rset^{d}\times\Rset^{d}\rightarrow\Rset^{d}$ and $G:\Rset^{d}\times\Rset^{d}\rightarrow\Rset^{d}$, we seek to find $x^{\star}$ and $y^{\star}$ such that
\begin{align}
\left\{
\begin{aligned}
&F(x^{\star},y^{\star}) = 0\\
&G(x^{\star},y^{\star}) = 0.
\end{aligned}\right.\label{prob:FG}
\end{align}
Since $F$ and $G$ are unknown, we assume that there is a stochastic oracle that outputs noisy values of $F(x,y)$ and $G(x,y)$ for a given pair $(x,y)$. In particular, for any given points $x$ and $y$ we have access to $F(x,y)\, +\, \xi$ and $G(x,y)\, +\, \psi$, where $\xi$ and $\psi$ are two random variables. Using this stochastic oracle, we study the two-time-scale nonlinear SA for solving problem \eqref{prob:FG}, which iteratively updates the iterates $x_{k}$ and $y_{k}$, the estimates of $x^{\star}$ and $y^{\star}$, respectively, for any $k\geq 0$ as
\begin{align}
\begin{aligned}
x_{k+1} &= x_{k} - \alpha_{k}\left(F(x_{k},y_{k}) + \xi_{k}\right)\\   
y_{k+1} &= y_{k} - \beta_{k}\left(G(x_{k},y_{k}) + \psi_{k}\right),
\end{aligned}\label{alg:xy}
\end{align}
where $x_{0}$ and $y_{0}$ are arbitrarily initialized in $\Rset^{d}$. In \eqref{alg:xy}, $\alpha_{k}$ and $\beta_{k}$ are two nonnegative step sizes chosen such that $\beta_{k}\ll \alpha_{k}$, i.e., the second iterate is updated using step sizes that are very small as compared to the ones used to update the first iterate.  Thus, the update of $x_{k}$ is referred to as the ``fast-time scale" while the update of $y_{k}$ is called the ``slow-time scale". The time-scale difference here is loosely defined as the ratio between the two step sizes, i.e., $\beta_{k}/\alpha_{k}$. In addition, the update of the \textit{fast iterate} depends on the \textit{slow iterate} and vice versa, that is, they are coupled to each other. To handle this coupling, the two step sizes have to be properly chosen to guarantee the convergence of the method. Indeed, an important problem in this area is to select the two step sizes so that the two iterates converge as fast as possible. Our main focus is, therefore, to derive the convergence rate of \eqref{alg:xy} in solving \eqref{prob:FG} under some proper choice of these two step sizes and to understand their impact on the performance of the nonlinear two-time-scale SA.

\subsection{Main contributions}
The focus of this paper is to derive the asymptotic convergence and finite-time performance of the nonlinear two-time-scale SA. In particular, we provide a formula that characterizes the rate of convergence of the main iterates to the desired solutions. Under some proper choice of step sizes $\alpha_{k}$ and $\beta_{k}$, we show that the mean square error generated by the method converges at a rate $\Ocal(1/k^{2/3})$, where $k$ is the number of iterations. Our key technique is motivated by the classic control theory for singularly perturbed systems, that is, we  properly choose the two step sizes to characterize the coupling between the fast and slow-time-scale iterates. In addition, our convergence rate analysis also provides an insight for different choice of step sizes under different settings.  

\subsection{Motivating applications}\label{sec:motivating_applications}
Nonlinear two-time-scale SA, Eq.\ \eqref{alg:xy}, has been found in numerous applications within engineering and sciences. One concrete example is to model the well-known stochastic gradient descent (SGD) with the Polyak-Rupert averaging for minimizing an objective function $f$, i.e., we want to solve 
\begin{align*}
\underset{y\in\Rset^{d}}{\text{minimize }} f(y)  
\end{align*}
where we only have access to the noisy observations of the true gradients. In this case, the classic SGD iteratively updates the iterates $y_{k}$ as \begin{align*}
y_{k+1} = y_{k} - \beta_{k}(\nabla f(y_{k}) + \psi_{k}),    
\end{align*}
where $\psi_{k}$ is some zero-mean random variables. In order to improve the convergence of SGD, an additional averaging step is often used \cite{PolyakJ1992,Ruppert88}
\begin{align*}
x_{k+1} = \frac{1}{k+1}\sum_{t=0}^{k}y_{k} &= x_{k} + \frac{1}{k+1}\left(y_{k} - x_{k}\right).      
\end{align*}
Obviously, these two updates are special case of the nonlinear two-time-scale SA in \eqref{alg:xy}. A more complicated version of two-time-scale SGD for optimizing the composite function $f(g(x))$ can also be viewed as a variant of \eqref{alg:xy} \cite{WangFL2017, ZhangX2019}. In addition, two-time-scale methods have been used in distributed optimization to address the issues of communication constraints 
\cite{DoanBS2017,DoanMR2018b} and in distributed control to handle clustered networks \cite{Romeres13,JChow85,Biyik08,Boker16,ThiemDN2020}. Finally, two-time-scale SA has been used extensively to model  reinforcement learning methods, for example, gradient temporal difference (TD) learning  and actor-critic methods \cite{Sutton2009a,Sutton2009b,Maeietal2009,KondaT2003,xu_actor_critic2020,wu_actor_critic2020,Hong_actor_critic2020,Khodadadian_actorcritic_2021}. As a specific application of \eqref{alg:xy} in reinforcement learning, we consider the gradient TD learning for solving the policy evaluation problem under nonlinear function approximations studied in \cite{Maeietal2009}, which can be viewed as a variant of \eqref{alg:xy}. In this problem, we want to estimate the cumulative rewards $V$ of a stationary policy using function approximations $V_{y}$, that is, our goal is to find $y$ so that $V_{y}$ is as close as possible to the true value $V$. Here, $V_{y}$ can be represented by a neural network where $y$ is the weight vector of the network. Let $\zeta$ be the environmental sate, $\gamma$ be the discount factor, $\phi(\zeta) = \nabla V_{y}(\zeta)$ be the feature vector of state $\zeta$, and $r$ be the reward return by the environment. Given a sequence of samples $\{\zeta_{k},r_{k}\}$, one version of  {GTD} are given as
\begin{align*}
x_{k+1} &= x_{k} + \alpha_{k}( \delta_{k} - \phi(\zeta_{k})^{T}x_{k})\phi(\zeta_{k})\\
y_{k+1} &= y_{k} + \beta_{k}\Big[\left(\phi(\zeta_{k})-\gamma\phi(\zeta_{k+1})\right)\phi(\zeta_{k})^Tx_{k} - h _{k}\Big],
\end{align*}
where $\delta_{k}$ and $h_{k}$ are defined as  
\begin{align*}
\delta_{k} &= r_{k} + \gamma V_{y_{k}}(\zeta_{k+1}) - V_{y_{k}}(\zeta_{k})\\ 
h_{k} &= (\delta_{k} - \phi(\zeta_{k})^Tx_{k})\nabla^2V_{y_{k}}(\zeta_{k})x_{k},    
\end{align*}
which is clearly a variant of \eqref{alg:xy} under some proper choice of $F$ and $G$. It has been observed that the {GTD} method is more stable and performs better compared to the single-time-scale counterpart (TD learning) under off-policy learning and nonlinear function approximations. 


\subsection{Related works}
Given the broad applications of SA in many areas, its convergence properties have received much interests for years. In particular, the asymptotic convergence of SA, including its two-time-scale variant, can be established by using the (almost) Martingale convergence theorem  when the noise are i.i.d or the ordinary differential equation (ODE) method for more general noise settings; see for example \cite{borkar2008,benveniste2012adaptive,BTbook1999}. Under the right conditions both of these methods show that the noise effects eventually average out and the SA iterate asymptotically converges to the desired solutions. 

The convergence rate of the single-time-scale SA has been mostly studied in the context of SGD under the i.i.d noise model; see for example \cite{BottouCN2018} and the references therein. Given the wide applications of SA in reinforcement learning, three are  significant interests in analyzing the finite-time analysis of SA under different conditions; see for example \cite{Bhandari2018_FiniteTD,Karimi_colt2019, SrikantY2019_FiniteTD,HuS2019,Chen_MC_LinearSA_2020} for linear SA and \cite{ChenZDMC2019} for nonlinear counterpart.  

Unlike the single-time-scale SA, the convergence rates of the two-time-scale SA are less understood due to the complicated interactions between the two step sizes and the iterates.  Specifically, the rates of the two-time-scale SA has been studied mostly for the linear settings, i.e, when $F$ and $G$ are linear functions w.r.t their variables; see for example in \cite{KondaT2004, DalalTSM2018, DoanR2019,GuptaSY2019_twoscale,Doan_two_time_SA2019,Kaledin_two_time_SA2020,Dalal_Szorenyi_Thoppe_2020}. On the other hand, we are only aware of the work in \cite{MokkademP2006}, which considers the finite-time analysis of the nonlinear two-time-scale SA in \eqref{alg:xy}. In particular, under the stability condition (Assumption $1$ in \cite{MokkademP2006}, $lim_{k\rightarrow\infty}(x_{k},y_{k}) = (x^{\star},y^{\star}))$ and when $F$ and $G$ can be locally approximated by linear functions in a neighborhood of $(x^{\star},y^{\star})$, a convergence rate of \eqref{alg:xy} in distribution is provided. They also show that the rates of the fast-time and slow-time scales are asymptotically decoupled under proper choice of step sizes, which agrees with the previous observations of two-time-scale SA; see for example \cite{KondaT2004}. In this paper, our focus is to study the finite-time analysis that characterizes the rates of \eqref{alg:xy} in mean square errors. We do this under different assumptions on the operators $F$ and $G$ as compared to the ones considered in \cite{MokkademP2006}; for example, we do not require the stability condition. Our setting is motivated by the conditions considered in \cite[Chapter 7]{Kokotovic_SP1999}, where the authors study the continuous-time and deterministic version of \eqref{alg:xy}, i.e., $\xi_{k} = \psi_{k} = 0$.     


\section{Main Results}\label{sec:results}
In this section, we present in details the main results of this paper, that is, we provide a finite-time analysis for the convergence rates of the two-time-scale stochastic approximation in \eqref{alg:xy}. Under some certain conditions explained below, we show that the mean square errors of these iterates converge to zero at a rate
\begin{align*}
&\Eset\left[\|y_{k}-y^{\star}\|^2\right] + \frac{\beta_{k}}{\alpha_{k}}\Eset\left[\|x_{k}-x^{\star}\|^2\right]\notag\\ 
&\leq \Ocal\left( \frac{\Eset[\|y_{0}-y^*\|^2+\|x_{0}-x^*\|^2]}{(k+1)^2}\frac{1}{(k+1)^{2/3}}\right),    
\end{align*}
where the choice of $\beta_{k}\ll\alpha_{k}$ is discussed in the next section. In addition, under the same choice of step sizes we also obtain
\begin{align*}
 \lim_{k\rightarrow\infty}\|x_{k}-x^{\star}\| = \lim_{k\rightarrow\infty}\|y_{k}-y^{\star}\| = 0\qquad a.s.   
\end{align*}
The main idea of our analysis is based on a proper choice of the two step sizes and a Lyapunov function that takes into account the coupling between the two iterates. Before presenting the details of our results, we discuss the main idea behind our approach and assumptions, motivated by the ones used to study the stability of the corresponding different equations of \eqref{alg:xy}  \cite{SaberiK_perturbation1984,Kokotovic_SP1999} .  
\subsection{Main Ideas}
For an ease of exposition, we assume for the moment that the step sizes are constants, i.e., $\beta_{k} \equiv \beta \ll \alpha \equiv \alpha_{k}$. As mentioned, the asymptotic convergence of SA can be established by using the ODE method, where under some proper choice of step sizes the noise effects in SA eventually average out and the SA follows the solution of suitably defined differential equations. In the case of nonlinear two-time-scale SA, it is reduced to the following two differential equations
\begin{align}
\begin{aligned}
\frac{dx}{dt} &= -F(x(t),y(t))\\   
\frac{dy}{dt} &= -\frac{\beta}{\alpha}G(x(t),y(t)),
\end{aligned}\label{alg:xy_ODE}
\end{align}
where the ratio $\beta/\alpha$ represents the difference in time scale between these two updates. We assume that $(x^*,y^*)$ is the unique equilibrium \eqref{alg:xy_ODE}.  The stability of \eqref{alg:xy_ODE} studied in \cite[Chapter 7]{Kokotovic_SP1999} can be argued as follows.  Since $\beta/\alpha \ll 1$, the dynamic of $x(t)$ evolves much faster than the one of $y(t)$. Thus, one can consider $y(t) = y$ being fixed in $\dot{x}$ and separately study the stability of the following ``fast" system  
\begin{align*}
\dot{x}(t) = - F(x(t),y).    
\end{align*}
Given a fixed $y$, to study the stability of $x(t)$ we first consider the condition on the existence of a solution of  this different equation, i.e., there exists an operator $H$ such that $x = H(y)$ is the unique solution of $F(x,y) = 0$ or the unique equilibrium of $\dot{x}$. A common condition is to assume that $F$ is Lipschitz continuous. Second, to show that $x(t)$ converges to this solution under any initial condition this solution must be unique. The convergence of $x(t)$ is then studied by searching for a candidate Lyapunov function $V_{F}(x,y)$, such that the following condition holds with some constant $\mu_{x}>0$
\begin{align*}
\frac{dV_{F}(x(t),y)}{dt} &= -\frac{\partial V_{F}}{\partial x}F(x(t),y) \leq -\mu_{x}\|x(t)-H(y)\|^2\\
V_{F}(x(t),y) &= 0\; \text{ iff }\; x(t) = H(y).
\end{align*}
In this case, Lyapunov theorem says that under the right conditions of the Lyapunov functions $x(t)$ converges exponetially to $H(y)$ \cite{Khalil2002}. This exponential stability is established based on the condition that $y(t)$ is held fixed in the update of $\dot{x}$. In our two-time-scale differential equation \eqref{alg:xy_ODE}, $y(t)$ is also changing, therefore, the derivative of $V_{F}$ is given as
\begin{align}
&\frac{d V_{F}(x(t),y(t))}{dt} =    -\frac{\partial V_{F}}{\partial x}F(x(t),y(t))  - \frac{\beta}{\alpha}\frac{\partial V_{F}}{\partial y}G(x(t),y(t)),\label{ODE:Eq1}
\end{align}
which does not immediately yield the convergence of $x(t)$ to $H(y(t))$ unless some additional requirements on the asymptotic convergence of $y(t)$ is satisfied. On the other hand, we consider the slow dynamic $\dot{y}$ as follows 
\begin{align*}
\dot{y}(t) = &-\frac{\beta}{\alpha}G(H(y(t)),y(t)) - \frac{\beta}{\alpha}\Big[G(x(t),y(t)) - G(H(y(t)),y(t))\Big].    
\end{align*}
Similarly, we assume that $G$ is Lipschitz continuous so that there exists a (unique) solution of this differential equation. To study the stability of this system, we again consider another candidate Lyapunov function when $x(t) = H(y(t))$, i.e., we assume that there exists a Lyapunov function $V_{G}(y)$ and a positive constant $\mu_{y}$ satisfying
\begin{align*}
\frac{d V_{G}(y(t))}{dt} = -\frac{\beta}{\alpha}\frac{\partial V_{G}}{\partial y}G(H(y(t)),y(t)) \leq -\mu_{y}\|y(t)\|^2,      
\end{align*}
which again implies that $y(t)$ converges exponentially to the equilibrium of $\dot{y}(t)$. However, in our case since $x(t)\neq H(y(t))$ we have
\begin{align}
\frac{d V_{G}(y(t))}{dt} &= -\frac{\beta}{\alpha}\frac{\partial V_{G}}{\partial y}G(H(y(t)),y(t)) - \frac{\beta}{\alpha}\frac{\partial V_{G}}{\partial y}\Big[G(x(t),y(t)) - G(H(y(t)),y(t))\Big]   \notag\\
& \leq -\mu_{y}\|y(t)\|^2 + \frac{L_{G}\beta }{\alpha}\left\|\frac{\partial V_{G}}{\partial y}\right| \left\|x(t)- H(y(t))\right\|,\label{ODE:Eq2}  
\end{align}
where we assume that $G$ is Lipschitz continuous with constant $L_{G}>0$. 
In this case, one cannot immediately conclude the convergence of $y(t)$, unless the asymptotic convergence of $x(t)$ to $H(y(t))$ is guaranteed at a proper rate. Thus, to study the stability of $x(t)$ and $y(t)$ one needs to study their convergences simultaneously. One way to achieve this goal is to consider the candidate Lyapunov function $V$, which is an aggregate of $V_{G}$ and $V_{F}$. Using this $V$ one can characterize the coupling as well as the convergence of these variables. In this paper, we use the following Lyapunov function based on the time-coupling ratio $\beta/\alpha$ between the fast and slow updates
\begin{align*}
V(x,y) = V_{F}(x,y) + \frac{\beta}{\alpha}V_{G}(x,y).     
\end{align*}
In \cite{Kokotovic_SP1999}, the authors consider a convex combination of $V_{F}$ and $V_{G}$ to study the stability of \eqref{alg:xy_ODE}. We find that it is more convenient to use the Lyapunov function above in studying the convergence rates of the two-time-scale stochastic approximation.  

Our settings and analysis in the sequel are established based on the observation explained above. Due to the impact of the noise, one cannot in general guarantee that the solutions of the stochastic systems will track the ones of \eqref{alg:xy_ODE}. Under some fairly standard assumptions, similar to the ones used in studying the stability of \eqref{alg:xy_ODE}, we establish the asymptotic convergence of $(x_{k},y_{k})$ to $(x^*,y^*)$. In addition, we provide a finite-time bound to characterize the rates of this convergence.    


\subsection{Preliminaries and Assumptions}
In this section we present the main assumptions and some preliminaries, which are useful for our later analysis. Our first assumption is on the smoothness of $F$ and $G$, which basically guarantees the existence of the solutions of \eqref{alg:xy_ODE}.
\begin{assump}\label{assump:smooth:FH}
Given $y\in\Rset^{d}$ there exists an operator $H:\Rset^{d}\rightarrow\Rset^{d}$ such that $x = H(y)$ is the unique solution of
\begin{align*}
F(H(y),y) = 0,    
\end{align*}
where $H$ and $F$ are Lipschitz continuous with positive constant $L_{H}$ and $L_{F}$, respectively, i.e., $\forall x_{1}, x_{2}, y_{1}, y_{2} \in\Rset^{d}$
\begin{align}
&\|H(y_{1}) - H(y_{2})\| \leq L_{H}\|y_{1}-y_{2}\|,\label{assump:smooth:FH:ineqH}\\
&\|F(x_{1},y_1) - F(x_{2},y_2)\| \leq L_{F}(\|x_{1}-x_{2}\| + \|y_{1} - y_{2}\|).     \label{assump:smooth:FH:ineqF}
\end{align}
\end{assump}
\begin{remark}
In the case of the linear two-time-scale SA, e.g., $F$ and $G$ are linear operators 
\begin{align*}
F(x,y) &= A_{11}x+A_{12}y\\
G(x,y) &= A_{12}x + A_{22}y,
\end{align*}
$H$ is also a linear operator, i.e., $H = A_{11}^{-1}A_{12}y$ where $A_{11}$ is assumed to be a negative definite (not necessarily symmetric) matrix \cite{KondaT2004,Doan_two_time_SA2019}. 
\end{remark}
Second, for the global asymptotic convergence of $x(t)$ to the equilibrium of \eqref{alg:xy_ODE}  it is necessary that this equilibrium is unique. This condition is guaranteed if $F$ is strong monotone. 
\begin{assump}\label{assump:sm:F}
$F$ is strongly monotone w.r.t $x$  when $y$ is fixed, i.e., there exists a constant $\mu_{F} > 0$ 
\begin{align}
\left\langle x - z, F(x,y) - F(z,y) \right\rangle \geq \mu_{F} \|x-z\|^2. \label{assump:sm:F:ineq}    
\end{align}
\end{assump}
Similarly, we consider the same  assumptions to guarantee the existence and uniqueness of the equilibrium of $\dot{y}$ in  \eqref{alg:xy_ODE}. 
\begin{assump}\label{assump:G}
The operator $G(\cdot,\cdot)$ is Lipschitz continuous with constant $L_{G}$, i.e., $\forall x_{1}, x_{2}, y_{1}, y_{2} \in\Rset^{d}$,
\begin{align}
\hspace{-0.3cm}\|G(x_{1},y_{1}) - G(x_{2},y_{2})\| \leq L_{G}\left(\|x_{1} - x_{2}\| + \|y_{1} - y_{2}\| \right).    \label{assump:G:smooth}
\end{align}
Moreover, $G$ is $1$-point strongly monotone w.r.t $y^{\star}$, i.e., there exists a constant $\mu_{G} > 0$ such that
\begin{align}
\left\langle y - y^{\star}, G(H(y),y) \right\rangle \geq \mu_{G} \|y - y^{\star}\|^2, \quad \forall y\in\Rset^{d}. \label{assump:G:sm}    
\end{align}
\end{assump}
Note that the Lipschitz continuity of $G$ on both variables is necessary since it guarantees the existence of the solutions of $\dot{y}$ when $x(t) = H(y(t))$ presented in the previous section. These two assumptions are also considered under different variants in the context of both linear and nonlinear two-time-scale {\sf SA} studied in \cite{KondaT2004, DalalTSM2018, DoanR2019,GuptaSY2019_twoscale,Doan_two_time_SA2019,Kaledin_two_time_SA2020,MokkademP2006}. Finally, we consider i.i.d noise models, that is, $\xi_{k}$ and $\psi_{k}$ are Martingale difference. We denote by $\Qcal_{k}$ the filtration containing all the history generated by \eqref{alg:xy} up to time $k$, i.e.,
\begin{equation*}
\Qcal_{k} = \{x_{0},y_{0},\xi_{0},\psi_{0},\xi_{1},\psi_{1},\ldots,\xi_{k},\psi_{k}\}.
\end{equation*}
\begin{assump}\label{assump:noise}
The random variables $\xi_{k}$ and $\psi_{k}$, for all $k\geq0$, are independent of each other and across time, with zero mean and common variances given as follows
\begin{equation}
\Eset[\xi_{k}^T\xi_{k}\,|\,\Qcal_{k-1}] = \Gamma_{11},\quad
\Eset[\psi_{k}^T\psi_{k}\,|\,\Qcal_{k-1}] = \Gamma_{22}.
\label{assump:variance}
\end{equation}
\end{assump}
We conclude this section by presenting some preliminaries, which will be used in deriving our main results in the next subsection. The proofs of these lemmas can be found in Section \ref{sec:preliminaries}. To the end of this paper, we consider nonincreasing and nonnegative time-varying sequence of step sizes $\{\alpha_{k},\beta_{k}\}$, e.g., $\alpha_{k}\leq \alpha_{0}$, $\beta_{k}\leq \beta_{0}$, and $\beta_{k}\leq\alpha_{k}$. To study the convergence rate of $(x_{k},y_{k})$ to $(x^*,y^*)$, one can consider the  mean square errors $\Eset[\|x_{k}-x^*\|^2]$ and $\Eset[\|y_{k}-x^*\|^2]$. However, these errors do not immediately give the relationship between the fast and slow variables in \eqref{alg:xy}. On the other hand, as we discussed in the previous section the fast and slow-time-scale updates are coupled through the term $x-H(y)$. Therefore, we introduce the following two residual variables, which are more natural to characterize the coupling between $x_{k}$ and $y_{k}$ and to derive the rates of \eqref{alg:xy}
\begin{align}
\begin{aligned}
\xhat_{k} &= x_{k} - H(y_{k})\\   
\yhat_{k} &= y_{k} - y^{\star}.
\end{aligned}    \label{alg:xyhat}
\end{align}
Obviously, if $\yhat_{k}$ and $\xhat_{k}$ go to zero, $(x_{k},y_{k})\rightarrow(x^{\star},y^{\star})$. Thus, to establish the convergence of $(x_{k},y_{k})$ to $(x^{\star},y^{\star})$ one can instead study the convergence of $(\xhat_{k},\yhat_{k})$ to zero. To do that, we first consider the relation of these two residual variables in the following two lemmas. For convenience, we denote by 
\begin{align}
\gamma_{k} =  \frac{(1+L_{F}\alpha_{k})^2\beta_{k}^2}{\mu_{F}\alpha_{k}}\cdot   \label{notation:gamma}
\end{align}

\begin{lem}\label{lem:xhat}
Suppose that Assumptions \ref{assump:smooth:FH}  -- \ref{assump:noise} hold. Let $\{x_{k},y_{k}\}$ be generated by \eqref{alg:xy}. Then, we have for all $k\geq 0$
\begin{align}
\Eset\left[\|\xhat_{k+1}\|^2\,|\,\Qcal_{k}\right] &\leq \left(1-\mu_{F}\alpha_{k} \right) \|\xhat_{k}\|^2 +  \beta_{k}^2\Gamma_{22} +  \alpha_{k}^2\Gamma_{11} + 4L_{H}^2\gamma_{k}\Gamma_{22}\notag\\ 
&\quad + L_{H}^2\left(2L_{G}^2\beta_{k}^2 + \alpha_{k}^2+ 4L_{G}^2\gamma_{k}\right)\|\xhat_{k}\|^2 \notag\\
&\quad + 2L_{H}^2L_{G}^2(L_{H}+1)^2\left(\beta_{k}^2 + 2\gamma_{k} \right)\|\yhat_{k}\|^2.\label{lem:xhat:Ineq}
\end{align}
\end{lem}
\begin{lem}\label{lem:yhat}
Suppose that Assumptions \ref{assump:smooth:FH} -- \ref{assump:noise} hold. Let $\{x_{k},y_{k}\}$ be generated by \eqref{alg:xy}. Then we have for any $k\geq 0$
\begin{align}
\Eset\left[\|\yhat_{k+1}\|^2\,|\,\Qcal_{k}\right] 
&\leq \left(1-\mu_{G}\beta_{k}\right)\|\yhat_{k}\|^2 + \beta_{k}^2\Gamma_{22} + 2L_{G}^2(L_{H}+1)^2\beta_{k}^2\|\yhat_{k}\|^2 +  \frac{L_{G}\beta_{k}}{\mu_{G}}\|\xhat_{k}\|^2\notag\\ 
&\quad  +  L_{G}^2(L_{H}+2)\beta_{k}^2\|\xhat_{k}\|^2\label{lem:yhat:Ineq}.
\end{align}
\end{lem}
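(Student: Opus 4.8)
The plan is to derive the recursion directly from the slow-scale update in \eqref{alg:xy} by expanding the squared norm and taking a conditional expectation, exactly as one proves a descent lemma for stochastic gradient methods, but with the one-point monotonicity of $G$ playing the role of convexity. First I would use the definition \eqref{alg:xyhat} to write $\yhat_{k+1} = \yhat_{k} - \beta_{k}\left(G(x_{k},y_{k}) + \psi_{k}\right)$, square, and condition on $\Qcal_{k}$. Since $x_{k},y_{k}$ are $\Qcal_{k}$-measurable while $\psi_{k}$ is zero-mean and independent of the past (Assumption \ref{assump:noise}), the term linear in $\psi_{k}$ vanishes in expectation and the quadratic noise term contributes exactly $\Gamma_{22}\beta_{k}^2$, leaving
\begin{align*}
\Eset\left[\|\yhat_{k+1}\|^2\,|\,\Qcal_{k}\right] = \|\yhat_{k}\|^2 - 2\beta_{k}\left\langle \yhat_{k}, G(x_{k},y_{k})\right\rangle + \beta_{k}^2\|G(x_{k},y_{k})\|^2 + \Gamma_{22}\beta_{k}^2.
\end{align*}
The two remaining deterministic terms are then handled separately.

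For the inner-product term, which carries the main contraction, I would add and subtract $G(H(y_{k}),y_{k})$, splitting it into $\left\langle \yhat_{k}, G(H(y_{k}),y_{k})\right\rangle$ and a coupling residual $\left\langle \yhat_{k}, G(x_{k},y_{k}) - G(H(y_{k}),y_{k})\right\rangle$. The first piece is bounded below by $\mu_{G}\|\yhat_{k}\|^2$ using the one-point strong monotonicity \eqref{assump:strong_monotone:G} at $y=y_{k}$. The residual is controlled by Cauchy--Schwarz together with the joint Lipschitz continuity of $G$ \eqref{assump:Lipschitz:G} (taking $x_{1}=x_{k}$, $x_{2}=H(y_{k})$, $y_{1}=y_{2}=y_{k}$, so it is at most $L_{G}\|\xhat_{k}\|$), followed by Young's inequality $2ab \le \mu_{G}\beta_{k}a^2 + (\mu_{G}\beta_{k})^{-1}b^2$. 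The crucial accounting is to allocate the Young budget so that exactly $\mu_{G}\beta_{k}\|\yhat_{k}\|^2$ is spent; this downgrades the $-2\mu_{G}\beta_{k}\|\yhat_{k}\|^2$ from monotonicity to the stated $-\mu_{G}\beta_{k}\|\yhat_{k}\|^2$ and, in exchange, produces a coupling term of order $\beta_{k}\|\xhat_{k}\|^2/\mu_{G}$, i.e.\ the $\frac{L_{2}^2}{\mu_{G}}\beta_{k}\|\xhat_{k}\|^2$ term in \eqref{lem:yhat:Ineq} (the constant $L_{2}$ absorbing the Lipschitz factors). This is precisely the fast--slow coupling that will later be dominated via Lemma~\ref{lem:xhat} and the choice $\beta_{k}\ll\alpha_{k}$.

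For the quadratic term $\beta_{k}^2\|G(x_{k},y_{k})\|^2$, I would use $G(x^{\star},y^{\star})=0$ (valid since $x^{\star}=H(y^{\star})$ solves \eqref{prob:FG}) together with \eqref{assump:Lipschitz:G} to obtain $\|G(x_{k},y_{k})\| \le L_{G}\left(\|x_{k}-x^{\star}\| + \|\yhat_{k}\|\right)$, and then convert $\|x_{k}-x^{\star}\|$ into the residual coordinates via the triangle inequality and Lipschitzness of $H$ \eqref{assump:Lipschitz:H}, namely $\|x_{k}-x^{\star}\| \le \|\xhat_{k}\| + L_{H}\|\yhat_{k}\|$. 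Squaring and splitting the cross products with $(a+b)^2 \le 2a^2+2b^2$ yields an $O(\beta_{k}^2)$ term bounded by $L_{G}^2(1+2L_{H}^2)\beta_{k}^2\left(\|\xhat_{k}\|^2+\|\yhat_{k}\|^2\right)$. Collecting the contraction, coupling, noise, and quadratic contributions then gives \eqref{lem:yhat:Ineq}.

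I expect the inner-product/Young's-inequality step to be the only delicate part, since it is where the time-scale separation must be respected: choosing the split to leave a strictly negative $\|\yhat_{k}\|^2$ coefficient unavoidably inflates the $\|\xhat_{k}\|^2$ coupling constant like $1/\mu_{G}$, and pinning down the exact constant requires careful bookkeeping of the Lipschitz factors (and, where convenient, the bound $\beta_{k}\le\beta_{0}$). Everything else is routine expansion and Lipschitz estimation, so the proof should proceed cleanly once the coupling term is isolated.
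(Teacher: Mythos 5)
Your proposal follows essentially the same route as the paper's proof: expand the squared slow-scale update, use the one-point strong monotonicity of $G$ at $(H(y_k),y_k)$ for the contraction, apply Young's inequality with weight $\mu_G\beta_k$ to trade half of the $-2\mu_G\beta_k\|\yhat_k\|^2$ for the $O(\beta_k)\|\xhat_k\|^2$ coupling term, and bound the quadratic term via $G(H(y^\star),y^\star)=0$ together with the Lipschitz properties of $G$ and $H$, with the martingale noise contributing exactly $\Gamma_{22}\beta_k^2$. The only difference is cosmetic --- you expand around $G(x_k,y_k)$ directly rather than first isolating the block $y_k-y^\star-\beta_k G(H(y_k),y_k)$ as the paper does --- which shifts the non-essential constants (your computation yields $L_G^2/\mu_G$ for the coupling term and $2L_G^2(L_H+1)^2$ for the $\beta_k^2$ term, versus the stated $L_2^2/\mu_G$ and $L_G^2(1+2L_H^2)$); since the paper's own bookkeeping of these constants is comparably loose, this does not affect the substance of the lemma.
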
 
Next, we show that $\Eset\left[\|\xhat_{k}\|^2 + \|\yhat_{k}\|^2\right]$ is bounded in the following lemma. 
\begin{lem}\label{lem:x+y_bound}
Suppose that Assumptions \ref{assump:smooth:FH} -- \ref{assump:noise} hold. Let $\{x_{k},y_{k}\}$ be generated by \eqref{alg:xy}. Suppose that the nonincreasing and nonnegative sequence of step sizes $\{\alpha_{k},\beta_{k}\}$ satisfy
\begin{align}
\begin{aligned}
&\frac{\beta_{k}}{\alpha_{k}} \leq \frac{\mu_{F}\mu_{G}}{2L_{G}},\quad \sum_{k=0}^{\infty}\alpha_{k} = \sum_{k=0}^{\infty}\beta_{k} = \infty,\\
&C_{1} \triangleq \max\left\{\sum_{t=0}^{\infty}\alpha_{k}^2 \,\,,\sum_{t=0}^{\infty}\beta_{k}^2 \,\,,\sum_{t=0}^{\infty}\frac{\beta_{k}^2}{\alpha_{k}}\right\} < \infty.
\end{aligned}\label{lem:x+y_bound:stepsize}
\end{align}
In addition, let $C$ be 
\begin{align}
&C_{2} \triangleq 2L_{G}^2(L_{H}+1)^2\sum_{t=0}^{\infty}\frac{\alpha_{k}^2}{2L_{G}^2} + 2(L_{H}^2+1)(\beta_{k}^2 + \gamma_{k}) < \infty.\label{lem:x+y_bound:C}   
\end{align} 
Then we obtain for all $k\geq 0$
\begin{align}
&\Eset\left[\|\xhat_{k}\|^2 + \|\yhat_{k}\|^2\right] \leq D \triangleq \frac{ C_{2}\Eset\left[\|\zhat_{0}\|^2\right] + C_{1}\exp\left(C_{2}\right)\left(3\Gamma_{22} + \Gamma_{11}\right)}{\exp(-C_{2})}\cdot   \label{lem:x+y_bound:Ineq}
\end{align}
\end{lem}
Finally, in our analysis we utilize the well-known almost supermartingale convergence result to establish the asymptotic convergence of our iterates \cite{Robbins1971}.
\begin{lem}[\cite{Robbins1971}]\label{lem:Martingale}
Let $\{w_{k}\},$ $\{v_{k}\}$, $\{\sigma_{k}\}$, and $\{\delta_{k}\}$ be non-negative sequences of random variables and satisfy 
\begin{align}
\begin{aligned}
&\mathbb{E}\Big[\,w_{k+1}\,|\,\Qcal_{k}\,\Big] \leq (1+\sigma_{k})w_{k} - v_{k} + \delta_{k}\\
&\sum_{k=0}^\infty \sigma_{k} < \infty \text{ a.s, }\quad \sum_{k=0}^\infty \delta_{k} < \infty \text{ a.s.}
\end{aligned}\label{lem:Martingale:Ineq1}
\end{align}
where $\Qcal_k = \{w(0),\ldots,w_{k}\}$. Then $\{w_{k}\}$ converges a.s., and  $\sum_{k=0}^{\infty} v_{k}<\infty$ a.s.
\end{lem}

\subsection{Convergence Results}\label{subsec:analysis}
In this section, we present the main results of this paper, which are the convergence properties of \eqref{alg:xy} under the assumptions stated in the previous section. We first study the asymptotic convergence of $(x_{k},y_{k})$ to $(x^*,y^*)$ in the following theorem. 
\begin{thm}\label{thm:asymptotic}
Suppose that Assumptions \ref{assump:smooth:FH} -- \ref{assump:noise} hold. Let $\{x_{k},y_{k}\}$ be generated by \eqref{alg:xy} with $\alpha_{k},\beta_{k}$ satisfying
\begin{align}
\begin{aligned}
&\frac{\beta_{0}}{\alpha_{0}} \leq \frac{\mu_{F}\mu_{G}}{2L_{G}},\quad \sum_{k=0}^{\infty}\alpha_{k} = \sum_{k=0}^{\infty}\beta_{k} = \infty\\  &\max\left\{\sum_{t=0}^{\infty}\alpha_{k}^2 \,\,,\sum_{t=0}^{\infty}\beta_{k}^2 \,\,,\sum_{t=0}^{\infty}\frac{\beta_{k}^2}{\alpha_{k}}\right\} < \infty.
\end{aligned}\label{thm:asymptotic:stepsizes}
\end{align}
Then we have
\begin{align}
\lim_{k\rightarrow\infty} \xhat_{k} = \lim_{k\rightarrow\infty}\yhat_{k} = 0\quad\text{a.s.} \label{thm:asymptotic:conv}   
\end{align}
\end{thm}
\begin{remark}
One can see from \eqref{thm:asymptotic:stepsizes} that $\beta_{k} \ll \alpha_{k}$. There are many choices of step sizes $\alpha_{k}$ and $\beta_{k}$ satisfying these conditions, e.g., let $1/2 < a < b\leq 1$, $2b-a>1$, and 
\begin{equation*}
\alpha_{k} = \frac{\alpha_{0}}{(k+2)^{a}},\quad \beta_{k} = \frac{\beta_{0}}{(k+2)^{b}}\cdot
\end{equation*}
In Theorem \ref{thm:rate} below, we choose $a = 2/3$ and $b = 1$ to have the best possible rate from our analysis for the convergence of the nonlinear two-time-scale SA \eqref{alg:xy}.
\end{remark}
\begin{proof}
For convenience we denote by 
\begin{equation*}
\zhat_{k} = \left[\begin{array}{c}
\xhat_{k}  \\
\yhat_{k} 
\end{array}\right].    
\end{equation*}
The main idea of our analysis is to use Lemmas \ref{lem:xhat} and \ref{lem:yhat} to show that $\|\zhat\|$ is an almost supermartingale sequence, therefore, it converges by Lemma \ref{lem:Martingale}. Indeed, adding \eqref{lem:xhat:Ineq} to \eqref{lem:yhat:Ineq} we obtain
\begin{align*}
\Eset\left[\|\zhat_{k+1}\|^2\,|\,\Qcal_{k}\right]
&\leq \left(1-\mu_{F}\alpha_{k} \right) \|\xhat_{k}\|^2 +  \beta_{k}^2\Gamma_{22} + L_{H}^2\gamma_{k}\Gamma_{22} + L_{H}^2\left(2L_{G}^2\beta_{k}^2 + \alpha_{k}^2+L_{G}^2\gamma_{k}\right)\|\xhat_{k}\|^2 \notag\\
&\quad + L_{H}^2L_{G}^2(L_{H}+1)^2\left(2\beta_{k}^2 +   \gamma_{k} \right)\|\yhat_{k}\|^2\notag\\
&\quad +  \left(1-\mu_{G}\beta_{k}\right)\|\yhat_{k}\|^2 + \beta_{k}^2\Gamma_{22} + 2L_{G}^2(L_{H}+1)^2\beta_{k}^2\|\yhat_{k}\|^2\notag\\ 
&\quad +  \frac{L_{G}\beta_{k}}{\mu_{G}}\|\xhat_{k}\|^2 +  L_{G}^2(L_{H}+2)\beta_{k}^2\|\xhat_{k}\|^2\allowdisplaybreaks\notag\\
&\leq \|\zhat_{k}\|^2 -\mu_{F}\alpha_{k}\|\xhat_{k}\|^2 - \mu_{G}\beta_{k}\|\yhat_{k}\|^2 +  \frac{L_{G}\beta_{k}}{\mu_{G}}\|\xhat_{k}\|^2 +  2\beta_{k}^2\Gamma_{22} +  \alpha_{k}^2\Gamma_{11} + L_{H}^2\gamma_{k}\Gamma_{22}\notag\\ 
&\quad + (L_{H}+1)^2\left(2L_{G}^2\beta_{k}^2 + \alpha_{k}^2+ L_{G}^2\gamma_{k}\right)\|\xhat_{k}\|^2 \notag\\
&\quad + (L_{H}^2+1)L_{G}^2(L_{H}+1)^2\left(2\beta_{k}^2 + \gamma_{k} \right)\|\yhat_{k}\|^2\allowdisplaybreaks\notag\\
&\leq \|\zhat_{k}\|^2 -\frac{\mu_{F}}{2}\alpha_{k}\|\xhat_{k}\|^2 - \mu_{G}\beta_{k}\|\yhat_{k}\|^2 +  2\beta_{k}^2\Gamma_{22} +  \alpha_{k}^2\Gamma_{11} + L_{H}^2\gamma_{k}\Gamma_{22}\notag\\
&\quad + L_{G}^2(L_{H}+1)^4\left(2L_{G}^2\beta_{k}^2 + \alpha_{k}^2 + \gamma_{k}\right)\|\zhat_{k}\|^2,
\end{align*}
where the last inequality we use \eqref{thm:asymptotic:stepsizes} to have
\begin{align*}
-\mu_{F}\alpha_{k} + \frac{L_{G}\beta_{k}}{\mu_{G}} \leq -\frac{\mu_{F}\alpha_{k}}{2}\cdot    
\end{align*}
Since $\alpha_{k},\beta_{k}$ satisfy \eqref{thm:asymptotic:stepsizes}, the relation above satisfies all the conditions in \eqref{lem:Martingale:Ineq1}, where $w_{k} = \|\zhat_{k}\|^2$. We, therefore, obtain
\begin{align*}
\left\{\begin{array}{l}
\|\zhat_{k}\|^2 = \|\xhat_{k}\|^2 + \|\yhat_{k}\|^2\; \text{ converges a.s.}  \\
    \sum_{k=0}^{\infty}  \left(\frac{\mu_{F}}{2}\alpha_{k}\|\xhat_{k}\|^2 + \mu_{G}\beta_{k}\|\yhat_{k}\|^2\right) < \infty\; \text{ a.s.}
\end{array} \right.   
\end{align*}
Since $\sum_{k=0}^{\infty} \alpha_{k} = \sum_{k=0}^{\infty} \beta_{k} = \infty$, these  relations gives \eqref{thm:asymptotic:conv}, which concludes our proof.  
\end{proof}
We next study the convergence rate of the nonlinear two-time-scale SA, where we provide a finite-time analysis for the mean square error generated by \eqref{alg:xy} to zero. The main idea is to utilize Lemmas \ref{lem:xhat}--\ref{lem:x+y_bound} and the following Lyapunov equation representing the coupling between the two iterates  
\begin{align}
V(\xhat_{k},\yhat_{k}) &= \|\yhat_{k}\|^2 + \frac{2L_{G}}{\mu_{F}\mu_{G}}\frac{\beta_{k}}{\alpha_{k}} \|\xhat_{k}\|^2\notag\\ 
&= \|y_{k} - y^{\star}\|^2 + \frac{2L_{G}}{\mu_{F}\mu_{G}}\frac{\beta_{k}}{\alpha_{k}} \|x_{k} - H(y_{k})\|^2 .\label{notation:V}    
\end{align}
The convergence rate of $V_{k}$ to zero in expectation is formally stated in the following theorem. 

\begin{thm}\label{thm:rate}
Suppose that Assumptions \ref{assump:smooth:FH} -- \ref{assump:noise} hold. Let $\{x_{k},y_{k}\}$ be generated by \eqref{alg:xy} with  $\alpha_{k},\beta_{k}$ satisfying
\begin{align}
\begin{aligned}
&\frac{\beta_{0}}{\alpha_{0}} \leq \min\left\{\frac{\mu_{F}\mu_{G}}{2L_{G}}\,,\,\frac{\mu_{F}}{2\mu_{G}}\right\},\quad \beta_{0} \geq \frac{2}{\mu_{G}},\\
&\alpha_{k} = \frac{\alpha_{0}}{(k+2)^{2/3}},\quad \beta_{k} = \frac{\beta_{0}}{(k+2)}.
\end{aligned}\label{thm:rate:stepsizes}
\end{align}
Let $D$ be defined in \eqref{lem:x+y_bound:Ineq}. Then we have for all $k\geq0$
\begin{align}
\Eset\left[V(\xhat_{k},\yhat_{k})\right]
&\leq \frac{\Eset[V(\xhat_{0},\yhat_{0})]}{(k+1)^2} + \frac{(2L_{G} + \mu_{F}\mu_{G})\beta_{0}^2\Gamma_{22} }{\mu_{F}\mu_{G}(k+1)}\notag\\ 
&\quad + \frac{4DL_{G}^2(L_{H}^2L_{G}+\mu_{F}\mu_{G})(L_{H}+1)^2\beta_{0}^2}{\mu_{F}\mu_{G}(k+1)}\notag\\
&\quad +  \frac{2L_{G}\alpha_{0}\beta_{0}\Gamma_{11}}{\mu_{F}\mu_{G}(k+1)^{2/3}} + \frac{2L_{G}L_{H}^2(1+L_{F}\alpha_{0})^2\beta_{0}^3\Gamma_{22}}{\mu_{F}^2\mu_{G}\alpha_{0}^2(k+1)^{2/3}}\notag\\
&\quad + \frac{2DL_{H}^2L_{G}(L_{H}+1)^2\beta_{0}(\alpha_{0}+ \mu_{F}\mu_{G}^2)}{\mu_{F}\mu_{G}(k+1)^{2/3}}\cdot    \label{thm:rate:ineq}
\end{align}
\end{thm}

\begin{proof}
For convenience, let $\omega_{k}$ be
\begin{equation*}
\omega_{k} = \frac{2L_{G}}{\mu_{F}\mu_{G}}\frac{\beta_{k}}{\alpha_{k}},    
\end{equation*}
which is nonincreasing due to \eqref{thm:rate:stepsizes}. The conditions in \eqref{thm:rate:stepsizes} obviously satisfy \eqref{lem:x+y_bound:stepsize}. Taking the expectation on both sides of \eqref{lem:xhat:Ineq} and multiplying both sides of \eqref{lem:xhat:Ineq} by $\omega_{k}$ we obtain
\begin{align*}
\omega_{k+1}\Eset\left[\|\xhat_{k+1}\|^2\right]
&\leq \omega_{k}\Big(1-\mu_{F}\alpha_{k} \Big) \Eset\left[\|\xhat_{k}\|^2\right]\notag\\ 
&\quad +  \frac{2L_{G}}{\mu_{F}\mu_{G}}\Big(\beta_{k}^2\Gamma_{22} +  \alpha_{k}\beta_{k}\Gamma_{11} + \frac{L_{H}^2\gamma_{k}\beta_{k}}{\alpha_{k}}\Gamma_{22}\Big)\notag\\ 
&\quad + \frac{2L_{G}L_{H}^2}{\mu_{F}\mu_{G}}\left(2L_{G}^2\beta_{k}^2 + \alpha_{k}\beta_{k} + \frac{L_{G}^2\gamma_{k}\beta_{k}}{\alpha_{k}}\right)\Eset\left[\|\xhat_{k}\|^2\right] \notag\\
&\quad + \frac{2L_{H}^2L_{G}^3(L_{H}+1)^2}{\mu_{F}\mu_{G}}\left(2\beta_{k}^2 +   \frac{\gamma_{k}\beta_{k}}{\alpha_{k}} \right)\Eset\left[\|\yhat_{k}\|^2\right]\notag\\
&= (1-\mu_{G}\beta_{k})\omega_{k} \Eset\left[\|\xhat_{k}\|^2\right] - \left(\mu_{F}\omega_{k}\alpha_{k} - \mu_{G}\beta_{k}\omega_{k}\right)\Eset\left[\|\xhat_{k}\|^2\right]\notag\\ 
&\quad +  \frac{2L_{G}}{\mu_{F}\mu_{G}}\left(\beta_{k}^2\Gamma_{22} +  \alpha_{k}\beta_{k}\Gamma_{11} + \frac{L_{H}^2\gamma_{k}\beta_{k}}{\alpha_{k}}\Gamma_{22}\right)\notag\\ 
&\quad + \frac{2L_{G}L_{H}^2}{\mu_{F}\mu_{G}}\left(2L_{G}^2\beta_{k}^2 + \alpha_{k}\beta_{k} + \frac{L_{G}^2\gamma_{k}\beta_{k}}{\alpha_{k}}\right)\Eset\left[\|\xhat_{k}\|^2\right] \notag\\
&\quad + \frac{2L_{H}^2L_{G}^3(L_{H}+1)^2}{\mu_{F}\mu_{G}}\left(2\beta_{k}^2 +   \frac{\gamma_{k}\beta_{k}}{\alpha_{k}} \right)\Eset\left[\|\yhat_{k}\|^2\right].
\end{align*}
Taking the expectation on both sides of  \eqref{lem:yhat:Ineq} and adding to the preceding relation yields 
\begin{align*}
\Eset[V(\xhat_{k+1},\yhat_{k+1})] &\leq (1-\mu_{G}\beta_{k})\Eset[V(\xhat_{k},\yhat_{k})]\notag\\ 
&\quad - \left(\mu_{F}\omega_{k}\alpha_{k} - \mu_{G}\beta_{k}\omega_{k} - \frac{L_{G}\beta_{k}}{\mu_{G}}\right)\Eset\left[\|\xhat_{k}\|^2\right]\notag\\ 
&\quad +  \frac{2L_{G}}{\mu_{F}\mu_{G}}\left(\beta_{k}^2\Gamma_{22} +  \alpha_{k}\beta_{k}\Gamma_{11} + \frac{L_{H}^2\gamma_{k}\beta_{k}}{\alpha_{k}}\Gamma_{22}\right)\notag\\ 
&\quad + \frac{2L_{G}L_{H}^2}{\mu_{F}\mu_{G}}\left(2L_{G}^2\beta_{k}^2 + \alpha_{k}\beta_{k} + \frac{L_{G}^2\gamma_{k}\beta_{k}}{\alpha_{k}}\right)\Eset\left[\|\xhat_{k}\|^2\right] \notag\\
&\quad + \frac{2L_{H}^2L_{G}^3(L_{H}+1)^2}{\mu_{F}\mu_{G}}\left(2\beta_{k}^2 +   \frac{\gamma_{k}\beta_{k}}{\alpha_{k}}\right)\Eset\left[\|\yhat_{k}\|^2\right]\notag\\
&\quad + \beta_{k}^2\Gamma_{22} + 2L_{G}^2(L_{H}+1)^2\beta_{k}^2\Eset\left[\|\yhat_{k}\|^2\right] +  L_{G}^2(L_{H}+2)\beta_{k}^2\Eset\left[\|\xhat_{k}\|^2\right],
\end{align*}
which by using \eqref{thm:rate:stepsizes} to have
\begin{align*}
 &\mu_{F}\omega_{k}\alpha_{k} - \mu_{G}\beta_{k}\omega_{k} - \frac{L_{G}\beta_{k}}{\mu_{G}} = \frac{2L_{G}\beta_{k}}{\mu_{G}} - \frac{2L_{G}}{\mu_{F}}\frac{\beta_{k}^2}{\alpha_{k}} - \frac{L_{G}\beta_{k}}{\mu_{G}} \geq 0,
\end{align*}
we obtain 
\begin{align}
\Eset[V(\xhat_{k+1},\yhat_{k+1})] &\leq (1-\mu_{G}\beta_{k})\Eset[V(\xhat_{k},\yhat_{k})] +  2L_{G}^2(L_{H}+1)^2\beta_{k}^2\Eset\left[\|\zhat_{k}\|^2\right]\notag\\ 
& +  \frac{2L_{G}}{\mu_{F}\mu_{G}}\left(\beta_{k}^2\Gamma_{22} +  \alpha_{k}\beta_{k}\Gamma_{11} + \frac{L_{H}^2\gamma_{k}\beta_{k}}{\alpha_{k}}\Gamma_{22}\right) + \beta_{k}^2\Gamma_{22}\notag\\ 
& + \frac{2L_{H}^2L_{G}^3(L_{H}+1)^2}{\mu_{F}\mu_{G}}\left(2\beta_{k}^2 + \frac{\alpha_{k}\beta_{k}}{L_{G}^2} +   \frac{\gamma_{k}\beta_{k}}{\alpha_{k}} \right)\Eset\left[\|\zhat_{k}\|^2\right]
.\label{thm:rate:eq1}
\end{align}
Using $\gamma_{k}$ in \eqref{notation:gamma} and $\alpha_{k},\beta_{k}$ in \eqref{thm:rate:stepsizes} we have
\begin{align*}
&1.\; (k+2)^2(1-\mu_{G}\beta_{k})= (k+2)^2\left(1 - \frac{\mu_{G}\beta_{0}}{k+2}\right)\\ &\hspace{2.5cm}\leq (k+2)^2\left(1 - \frac{2}{k+2}\right) \leq (k+1)^2.\\
&2.\; (k+2)^2\alpha_{k}\beta_{k} = \alpha_{0}\beta_{0}(k+2)^{1/3}.\\
&3.\; \frac{\gamma_{k} \beta_{k}}{\alpha_{k}}(k+2)^2 \leq \frac{(1+L_{F}\alpha_{0})^2\beta_{0}^3}{\mu_{F}\alpha_{0}^2}(k+2)^{1/3}.
\end{align*}
Thus, multiplying both sides of \eqref{thm:rate:eq1} by $(k+2)^2$ and using \eqref{lem:x+y_bound:Ineq} and \eqref{thm:rate:stepsizes} we obtain 
\begin{align*}
(k+2)^2\Eset[V(\xhat_{k+1},\yhat_{k+1})]
&\leq (k+1)^2\Eset[V(\xhat_{k},\yhat_{k})] + 2L_{G}^2(L_{H}+1)^2\beta_{0}^2\Eset\left[\|\zhat_{k}\|^2\right] \notag\\ 
&\quad +  \frac{2L_{G}}{\mu_{F}\mu_{G}}\left(\beta_{0}^2\Gamma_{22} +  \alpha_{0}\beta_{0}\Gamma_{11}(k+2)^{1/3}\right) + \beta_{0}^2\Gamma_{22} 
\notag\\ 
&\quad + \frac{2L_{G}L_{H}^2(1+L_{F}\alpha_{0})^2\beta_{0}^3\Gamma_{22}}{\mu_{F}^2\mu_{G}\alpha_{0}^2}(k+2)^{1/3}\notag\\
&\quad + \frac{4L_{H}^2L_{G}^3(L_{H}+1)^2\beta_{0}^2}{\mu_{F}\mu_{G}}\Eset\left[\|\zhat_{k}\|^2\right]\notag\\
&\quad + \frac{2L_{H}^2L_{G}(L_{H}+1)^2\alpha_{0}\beta_{0}}{\mu_{F}\mu_{G}}(k+2)^{1/3}\Eset\left[\|\zhat_{k}\|^2\right]\notag\\
&\quad + \frac{2L_{H}^2L_{G}^3(L_{H}+1)^2(1+L_{F}\alpha_{0})^2\beta_{0}^3}{\mu_{F}^2\mu_{G}\alpha_{0}^2}(k+2)^{1/3}\Eset\left[\|\zhat_{k}\|^2\right]\notag\\
&\stackrel{\eqref{lem:x+y_bound:Ineq}}{\leq} (k+1)^2\Eset[V(\xhat_{k},\yhat_{k})] +\frac{(2L_{G} + \mu_{F}\mu_{G})\beta_{0}^2\Gamma_{22} }{\mu_{F}\mu_{G}} +  \frac{2L_{G}\alpha_{0}\beta_{0}\Gamma_{11}}{\mu_{F}\mu_{G}}(k+2)^{1/3} 
\notag\\ 
&\quad + \frac{2L_{G}L_{H}^2(1+L_{F}\alpha_{0})^2\beta_{0}^3\Gamma_{22}}{\mu_{F}^2\mu_{G}\alpha_{0}^2}(k+2)^{1/3}\notag\\
&\quad + \frac{4DL_{G}^2(L_{H}^2L_{G}+\mu_{F}\mu_{G})(L_{H}+1)^2\beta_{0}^2}{\mu_{F}\mu_{G}}\notag\\
&\quad + \frac{2DL_{H}^2L_{G}(L_{H}+1)^2\beta_{0}(\alpha_{0}+ \mu_{F}\mu_{G}^2)}{\mu_{F}\mu_{G}}(k+2)^{1/3},
\end{align*}
where the last inequality we use $\beta_{0}/\alpha_{0} \leq \mu_{F}\mu_{G}/(2L_{G})$ in the last term. Summing up both sides over $k = 0,\ldots,K$ for some $K>0$ we obtain
\begin{align*}
(K+2)^2\Eset[V(\xhat_{K+1},\yhat_{K+1})]
&\leq \Eset[V(\xhat_{0},\yhat_{0})] +\frac{(2L_{G} + \mu_{F}\mu_{G})\beta_{0}^2\Gamma_{22} }{\mu_{F}\mu_{G}}(K+1)\notag\\ 
&\quad +  \frac{2L_{G}\alpha_{0}\beta_{0}\Gamma_{11}}{\mu_{F}\mu_{G}}(K+2)^{4/3} 
\notag\\ 
&\quad + \frac{2L_{G}L_{H}^2(1+L_{F}\alpha_{0})^2\beta_{0}^3\Gamma_{22}}{\mu_{F}^2\mu_{G}\alpha_{0}^2}(K+2)^{4/3}\notag\\
&\quad + \frac{4DL_{G}^2(L_{H}^2L_{G}+\mu_{F}\mu_{G})(L_{H}+1)^2\beta_{0}^2}{\mu_{F}\mu_{G}}(K+1)\notag\\
&\quad + \frac{2DL_{H}^2L_{G}(L_{H}+1)^2\beta_{0}(\alpha_{0}+ \mu_{F}\mu_{G}^2)}{\mu_{F}\mu_{G}}(K+2)^{4/3},    
\end{align*}
where we use the integral test to have for all $k\geq 0$ 
\begin{align*}
&\sum_{k=0}^{K}(k+2)^{1/3} \leq \frac{1}{2^{1/3}} + \int_{t=0}^{K}(t+2)^{1/3}dt\notag\\ &= \frac{1}{2^{1/3}} + \frac{3}{4}\left((K+2)^{4/3} - \frac{1}{2^{4/3}}\right) \leq (K+2)^{4/3}.
\end{align*}
Diving both sides of the equation above by $(K+2)^2$ immediately gives \eqref{thm:rate:ineq}.
\end{proof}

\begin{remark}
First, we note that this convergence rate is the same as the one in the linear case studied in \cite{Doan_two_time_SA2019,DoanR2019} and in \cite{DalalTSM2018} with convergence in high probability. On the other hand, the recent work by \cite{Kaledin_two_time_SA2020} and \cite{Dalal_Szorenyi_Thoppe_2020} improve this rate to $1/k$ by further exploiting the linear structure of the two-time-scale SA. As a result it is not obvious whether one can apply the techniques in \cite{Kaledin_two_time_SA2020,Dalal_Szorenyi_Thoppe_2020} to improve the rates of the nonlinear two-time-scale SA studied in this paper.

Second, as observed from Eq.\ \eqref{thm:rate:eq1}, the two terms decide the rate of the two-time-scale {\sf SA} are $\beta_{k}\alpha_{k}$ and $\beta_{k}^3/\alpha_{k}^2$, which also characterize the coupling between the two iterates. Our choice of $\alpha$ and $\beta_{k}$ in the theorem is to balance these two terms, i.e., we want to achieve
\begin{align*}
\alpha_{k}\beta_{k} = \frac{\beta_{k}^3}{\alpha_{k}^2} \Rightarrow \alpha_{k}^{3} = \beta_{k}^2.      
\end{align*}
One can choose different step sizes, which will result into different decaying rates for different components in \eqref{thm:rate:ineq}. These step sizes, however, need to satisfy the conditions in \eqref{thm:asymptotic:stepsizes}. 
\end{remark}

\section{Proofs of Lemmas \ref{lem:xhat}--\ref{lem:x+y_bound}}\label{sec:preliminaries}
In this section, we provide the analysis for the results presented in Lemmas \ref{lem:xhat}--\ref{lem:x+y_bound}. 
\subsection{Proof of Lemma \ref{lem:xhat}}
Recall that $\xhat = x-H(y)$. We consider 
\begin{align*}
\xhat_{k+1} = x_{k+1} - H(y_{k+1})
&= x_{k} - \alpha_{k}F(x_{k},y_{k}) - \alpha_{k}\xi_{k} - H(y_{k+1})\notag\\
&= \xhat_{k} -  \alpha_{k}F(x_{k},y_{k}) - \alpha_{k}\xi_{k}  + H(y_{k}) - H(y_{k+1}),
\end{align*}
which implies
\begin{align}
\|\xhat_{k+1}\|^2 &= \big\|\xhat_{k}  - \alpha_{k}F(x_{k},y_{k}) - \alpha_{k}\xi_{k} + H(y_{k}) - H(y_{k+1})\big\|^2\notag\\
&= \|\xhat_{k}  - \alpha_{k}F(x_{k},y_{k})\|^2 + \left\|H(y_{k}) - H(y_{k+1}) - \alpha_{k}\xi_{k}\right\|^2\notag\\
&\quad + 2(\xhat_{k}-F(x_{k},y_{k}))^T(H(y_{k}) - H(y_{k+1}))  - 2\alpha_{k}^2\left(\xhat_{k} - \alpha_{k}F(x_{k},y_{k})\right)^T\xi_{k}. \label{lem:xhat:Eq1}
\end{align}
We next analyze each term on the right-hand side of \eqref{lem:xhat:Eq1}. First, using $F(H(y_{k}),y_{k}) = 0$ we have 
\begin{align}
\|\xhat_{k}  - \alpha_{k}F(x_{k},y_{k})\|^2 
&= \|\xhat_{k} \|^2 - 2\alpha_{k}(x_{k} - H(y_{k}) )^TF(x_{k},y_{k})  + \| \alpha_{k}F(x_{k},y_{k})\|^2\notag\\
&= \|\xhat_{k}\|^2 - 2\alpha_{k}\xhat_{k} ^T\left(F(x_{k},y_{k}) - F(H(y_{k}),y_{k})\right)\notag\\ 
&\quad  + \alpha_{k}^2\| F(x_{k},y_{k}) - F(H(y_{k}),y_{k})\|^2\notag\\
&\leq \|\xhat_{k}\|^2 - 2\mu_{F}\alpha_{k}\|\xhat_{k} \|^2 + L_{H}^2\alpha_{k}^2\|\xhat_{k} \|^2\notag\\ 
&= \left(1-2\mu_{F}\alpha_{k} + L_{H}^2\alpha_{k}^2\right) \|\xhat_{k}\|^2,\label{lem:xhat:Eq1a}
\end{align}
where the inequality is due to the strong monotone and Lispchitz continuity of $F$, i.e., Eqs.\ \eqref{assump:sm:F:ineq} and \eqref{assump:smooth:FH:ineqF}, respectively. Recall that  
\begin{align*}
\Qcal_{k} = \{x_{0},y_{0},\xi_{0},\psi_{0},\xi_{1},\psi_{1},\ldots,\xi_{k-1},\psi_{k-1}\}.
\end{align*}
We next take the conditional expectation of the second term on the right-hand side of \eqref{lem:xhat:Eq1} w.r.t $\Qcal_{k}$ and using Assumption \ref{assump:noise} to have
\begin{align}
&\Eset\left[\left\|H(y_{k}) - H(y_{k+1}) - \alpha_{k}\xi_{k}\right\|^2\,|\,\Qcal_{k}\right]\notag\\
&= \Eset\left[\left\|H(y_{k}) - H(y_{k+1})\right\|^2\,|\,\Qcal_{k} \right]  + \alpha_{k}^2\Eset\left[\left\|\xi_{k}\right\|^2\,|\,\Qcal_{k}\right]\notag\\
&\leq L_{H}^2\Eset\left[\left\|\beta_{k}G(x_{k},y_{k}) - \beta_{k}\psi_{k}\right\|^2\,|\,\Qcal_{k} \right] + \alpha_{k}^2\Eset\left[\left\|\xi_{k}\right\|^2\,|\,\Qcal_{k}\right]\notag\\
&= L_{H}^2\beta_{k}^2\left\|G(x_{k},y_{k})\right\|^2 + L_{H}^2\beta_{k}^2\Eset\left[\|\psi_{k}\|^2\,|\,\Qcal_{k}\right]  +  \alpha_{k}^2\Eset\left[\left\|\xi_{k}\right\|^2\,|\,\Qcal_{k}\right]\notag\\
&= L_{H}^2\beta_{k}^2\left\|G(x_{k},y_{k})\right\|^2 +  \beta_{k}^2\Gamma_{22} +  \alpha_{k}^2\Gamma_{11}\notag\\
&\leq 2L_{H}^2L_{G}^2\beta_{k}^2\|\xhat_{k}\|^2 + 2L_{H}^2L_{G}^2(L_{H}+1)^2\beta_{k}^2\|\yhat_{k}\|^2     +  \beta_{k}^2\Gamma_{22} +  \alpha_{k}^2\Gamma_{11}, \label{lem:xhat:Eq1b}
\end{align}
where the last inequality we use  $G(H(y^{\star}),y^{\star}) = 0$ and the Lipschitz continuity of $G$ and $H$ to obtain 
\begin{align}
\|G(x_{k},y_{k})\|^2
 &\leq 2\|G(x_{k},y_{k}) - G(H(y_{k}),y_{k})\|^2 +  2\|G(H(y_{k}),y_{k}) - G(H(y^{\star}),y^{\star})\|^2\notag\\
&\leq 2L_{G}^2\|\xhat_{k}\|^2 + 2L_{G}^2(\|H(y_{k}) - H(y^{\star})\| + \|y_{k}-y^{\star}\|^2)\notag\\
&\leq 2L_{G}^2\|\xhat_{k}\|^2 + 2L_{G}^2(L_{H}+1)^2\|\yhat_{k}\|^2.\label{lem:xhat:Eq1b1}
\end{align}
Finally, we analyze the third term by using \eqref{lem:xhat:Eq1a} and the preceding relation
\begin{align*}
&\left(\xhat_{k} - \alpha_{k}F(x_{k},y_{k})\right)^T(H(y_{k}) - H(y_{k+1}))
\leq \|\xhat_{k} - \alpha_{k}F(x_{k},y_{k})\|\|H(y_{k}) - H(y_{k+1})\|\notag\\
&\qquad\leq L_{H}\beta_{k}(\|\xhat_{k}\| + \alpha_{k}\|F(x_{k},y_{k})\|)\|G(x_{k},y_{k}) + \psi_{k}\|\notag\\ 
&\qquad= L_{H}\beta_{k}(\|\xhat_{k}\| + \alpha_{k}\|F(x_{k},y_{k}) - F(H(y_{k}),y_{k})\|) \|G(x_{k},y_{k}) + \psi_{k}\|\notag\\ 
&\qquad\leq L_{H}(1+L_{F}\alpha_{k})\beta_{k}\|\xhat_{k}\|\left(\|G(x_{k},y_{k})\| + \|\psi_{k}\|\right), 
\end{align*}
where the equality is due to $F(H(y_{k}),y_{k}) = 0$. Applying the second inequality in Eq.\ \eqref{lem:xhat:Eq1b1} to the preceding relation yields  
\begin{align}
&\left(\xhat_{k} - \alpha_{k}F(x_{k},y_{k})\right)^T(H(y_{k}) - H(y_{k+1}))\notag\\
&\qquad\leq L_{H}(1+L_{F}\alpha_{k})\beta_{k}\|\xhat_{k}\|\left(L_{G}\|\xhat_{k}\| + L_{G}(L_{H}+1)\|\yhat_{k}\| + \|\psi_{k}\|\right)\notag\\
&\qquad\leq \frac{\mu_{F}}{2}\alpha_{k}\|\xhat_{k}\|^2 + \gamma_{k}\left(L_{G}\|\xhat_{k}\| + L_{G}(L_{H}+1)\|\yhat_{k}\| + \|\psi_{k}\|\right)^2\notag\\
&\qquad\leq \frac{\mu_{F}}{2}\alpha_{k}\|\xhat_{k}\|^2 + 2L_{H}^2\gamma_{k} \left(L_{G}^2\|\xhat_{k}\|^2 + L_{G}^2(L_{H}+1)^2\|\yhat_{k}\|^2 + \|\psi_{k}\|^2\right),    \label{lem:xhat:Eq1c}
\end{align}
where the second inequality is due to the Cauchy-Schwarz inequality $2ab\leq \eta a^2 + 1/\eta b^2$, $\forall \eta >0$, and  $\gamma_{k}$ is defined in \eqref{notation:gamma}. Taking the conditional expectation on both sides of \eqref{lem:xhat:Eq1} w.r.t $\Qcal_{k}$ and using \eqref{lem:xhat:Eq1a}--\eqref{lem:xhat:Eq1c} and Assumption \ref{assump:noise} we obtain
\begin{align*}
\Eset\left[\|\xhat_{k+1}\|^2\,|\,\Qcal_{k}\right]
&= \left(1-2\mu_{F}\alpha_{k} \right) \|\xhat_{k}\|^2 + L_{H}^2(2L_{G}^2\beta_{k}^2 + \alpha_{k}^2)\|\xhat_{k}\|^2 \notag\\
&\quad + 2L_{H}^2L_{G}^2(L_{H}+1)^2\left(\beta_{k}^2 + 2\gamma_{k} \right)\|\yhat_{k}\|^2\notag\\
&\quad +  \beta_{k}^2\Gamma_{22} +  \alpha_{k}^2\Gamma_{11} + 4L_{H}^2\gamma_{k}\Gamma_{22} \notag\\
&\quad + \mu_{F}\alpha_{k}\|\xhat_{k}\|^2 + 4L_{H}^2L_{G}^2\gamma_{k}\|\xhat_{k}\|^2 \notag\\
&\leq \left(1-\mu_{F}\alpha_{k} \right) \|\xhat_{k}\|^2 +  \beta_{k}^2\Gamma_{22} +  \alpha_{k}^2\Gamma_{11} + 4L_{H}^2\gamma_{k}\Gamma_{22}\notag\\ 
&\quad + L_{H}^2\left(2L_{G}^2\beta_{k}^2 + \alpha_{k}^2+ 4L_{G}^2\gamma_{k}\right)\|\xhat_{k}\|^2 \notag\\
&\quad + 2L_{H}^2L_{G}^2(L_{H}+1)^2\left(\beta_{k}^2 + 2\gamma_{k} \right)\|\yhat_{k}\|^2,  
\end{align*}
which concludes our proof. 
\subsection{Proof of Lemma \ref{lem:yhat}}
Recall that $\yhat = y - y^*$. Using \eqref{alg:xy} we consider
\begin{align*}
\yhat_{k+1} &= y_{k+1} - y^{\star} = y_{k} - y^{\star} - \beta_{k}G(x_{k},y_{k})  - \beta_{k}\psi_{k} \notag\\
&= \yhat_{k} - \beta_{k}G(H(y_{k}),y_{k}) + \beta_{k} \left(G(H(y_{k}),y_{k}) - G(x_{k},y_{k})\right)  - \beta_{k}\psi_{k},
\end{align*}
which implies that
\begin{align}
\|\yhat_{k+1}\|^2 &= \|\yhat_{k} - \beta_{k}G(H(y_{k}),y_{k})\|^2 + \left\|\beta_{k} \left(G(H(y_{k}),y_{k}) - G(x_{k},y_{k})\right)  - \beta_{k}\psi_{k}\right\|^2\notag\\
&\quad + 2\beta_{k}(\yhat_{k} - \beta_{k}G(H(y_{k}),y_{k}))^T\left(G(H(y_{k}),y_{k}) - G(x_{k},y_{k})\right)\notag\\
&\quad - 2\beta_{k}(\yhat_{k} - \beta_{k}G(H(y_{k}),y_{k}))^T\psi_{k}. \label{lem:yhat:Eq1}
\end{align}
We next analyze each term on the right-hand side of \eqref{lem:yhat:Eq1}. First, using ${G(H(y^{\star}),y^{\star}) = 0}$, \eqref{assump:G:sm}, \eqref{assump:smooth:FH:ineqH} and \eqref{assump:G:smooth}  we consider the first term 
\begin{align}
\left\|\yhat_{k} - \beta_{k}G(H(y_{k}),y_{k})\right\|^2 &= \left\|\yhat_{k}\right\|^2 - 2\beta_{k}\yhat_{k}^TG(H(y_{k}),y_{k}) + \beta_{k}^2\left\|G(H(y_{k}),y_{k})\right\|^2\notag\\ 
&\leq \|\yhat_{k}\|^2 - 2\mu_{G}\beta_{k}\|\yhat_{k}\|^2  + \beta_{k}^2 \|G(H(y_{k}),y_{k}) - G(H(y^{\star}),y^{\star})\|^2\notag\\
&\leq (1- 2\mu_{G}\beta_{k})\|\yhat_{k}\|^2  + 2\beta_{k}^2 \|G(H(y_{k}),y_{k}) - G(H(y_{k}),y^{\star})\|^2\notag\\
&\quad + 2\beta_{k}^2 \| G(H(y_{k}),y^{\star}) - G(H(y^{\star}),y^{\star})\|^2\notag\\
&\leq (1- 2\mu_{G}\beta_{k})\|\yhat_{k}\|^2  + 2\beta_{k}^2 \|G(H(y_{k}),y_{k}) - G(H(y_{k}),y^{\star})\|^2\notag\\
&\quad +2\beta_{k}^2\| G(H(y_{k}),y^{\star}) - G(H(y^{\star}),y^{\star})\|^2\notag\\
&\leq (1- 2\mu_{G}\beta_{k})\|\yhat_{k}\|^2 + 2L_{G}^2\beta_{k}^2 \left(\|H(y_{k}) - H(y^{\star})\|^2 + \|\yhat_{k}\|^2\right)\notag\\
&\leq \left(1-2\mu_{G}\beta_{k} + 2(L_{H}^2+1)L_{G}^2\beta_{k}^2\right)\|\yhat_{k}\|^2.\label{lem:yhat:Eq1a}
\end{align}
Next, taking the conditional expectation of the second term on the right-hand side of \eqref{lem:yhat:Eq1} w.r.t $\Qcal_{k}$ and using Assumption \ref{assump:noise} and \eqref{assump:G:smooth} we have
\begin{align}
&\Eset\left[\left\|\beta_{k} \left(G(H(y_{k}),y_{k}) - G(x_{k},y_{k})\right)  - \beta_{k}\psi_{k}\right\|^2\;|\;\Qcal_{k}\right]\notag\\
&= \beta_{k}^2 \left\| G(H(y_{k}),y_{k}) - G(x_{k},y_{k})\right\|^2 +\beta_{k}^2 \Eset\left[\left\|\psi_{k}\right\|^2\;|\;\Qcal_{k}\right] \leq L_{G}^2\beta_{k}^2\|\xhat_{k}\|^2 + \beta_{k}^2\Gamma_{22}. 
\label{lem:yhat:Eq1b}
\end{align}
Finally, using Assumption \ref{assump:noise} and \eqref{lem:yhat:Eq1a} we consider the third term on the right-hand side of \eqref{lem:yhat:Eq1} 
\begin{align}
&2\beta_{k}(\yhat_{k} - \beta_{k}G(H(y_{k}),y_{k}))^T(G(H(y_{k}),y_{k}) - G(x_{k},y_{k})) \notag\\
&\leq 2\beta_{k}\left(\|\yhat_{k}\| + \beta_{k}\|G(H(y_{k}),y_{k})\|\right)L_{G}\|\xhat_{k}\|\notag\\
&= 2L_{G}\|\xhat_{k}\|\beta_{k}\Big(\|\yhat_{k}\| + \beta_{k}\|G(H(y_{k}),y_{k})- G(H(y_{k}),y^{\star}) + G(H(y_{k}),y^{\star}) - G(H(y^{\star}),y^{\star}) \|\Big)\notag\\
&\leq 2L_{G}(1+(L_{H}+1)L_{G}\beta_{k})\beta_{k}\|\yhat_{k}\|\|\xhat_{k}\|\notag\\
&= 2L_{G}\beta_{k}\|\yhat_{k}\|\|\xhat_{k}\| + 2L_{G}^2(L_{H}+1)\beta_{k}^2\|\yhat_{k}\|\|\xhat_{k}\|\notag\\
&\leq \mu_{G}\beta_{k}\|\yhat_{k}\|^2 + \frac{L_{G}\beta_{k}}{\mu_{G}}\|\xhat_{k}\|^2 +  L_{G}^2(L_{H}+1)\beta_{k}^2(\|\yhat_{k}\|^2 + \|\xhat_{k}\|^2),\label{lem:yhat:Eq1c}
\end{align}
where the last inequality is due to the Cauchy-Schwarz inequality. Thus, taking the conditional expectation of \eqref{lem:xhat:Eq1} w.r.t $\Qcal_{k}$ and using \eqref{lem:yhat:Eq1a}--\eqref{lem:yhat:Eq1c} yields
\begin{align*}
\Eset\left[\|\yhat_{k+1}\|^2\,|\,\Qcal_{k}\right]
&\leq \left(1-2\mu_{G}\beta_{k} + 2(L_{H}^2+1)L_{G}^2\beta_{k}^2\right)\|\yhat_{k}\|^2 +  L_{G}^2\beta_{k}^2\|\xhat_{k}\|^2 \notag\\
&\quad + \beta_{k}^2\Gamma_{22} + \mu_{G}\beta_{k}\|\yhat_{k}\|^2+ \frac{L_{G}\beta_{k}}{\mu_{G}}\|\xhat_{k}\|^2\notag\\ 
&\quad  +  L_{G}^2(L_{H}+1)\beta_{k}^2(\|\yhat_{k}\|^2 + \|\xhat_{k}\|^2)\notag\\
&= \left(1-\mu_{G}\beta_{k}\right)\|\yhat_{k}\|^2 + \beta_{k}^2\Gamma_{22} + 2L_{G}^2(L_{H}+1)^2\beta_{k}^2\|\yhat_{k}\|^2\notag\\ 
&\quad + \frac{L_{G}\beta_{k}}{\mu_{G}}\|\xhat_{k}\|^2 +  L_{G}^2(L_{H}+2)\beta_{k}^2\|\xhat_{k}\|^2,
\end{align*}
which concludes our proof. 
\subsection{Proof of Lemma \ref{lem:x+y_bound}}
Let $\zhat_{k} = [\xhat_{k}^T, \yhat_{k}^T]^T$. Adding \eqref{lem:xhat:Ineq} to \eqref{lem:yhat:Ineq} yields
\begin{align}
\Eset\left[\|z_{k+1}\|^2\,|\,\Qcal_{k}\right] 
& \leq \left(1-\mu_{F}\alpha_{k} \right) \|\xhat_{k}\|^2 +  \beta_{k}^2\Gamma_{22} +  \alpha_{k}^2\Gamma_{11} + 4L_{H}^2\gamma_{k}\Gamma_{22}\notag\\ 
&\quad + L_{H}^2\left(2L_{G}^2\beta_{k}^2 + \alpha_{k}^2+ 4L_{G}^2\gamma_{k}\right)\|\xhat_{k}\|^2 \notag\\
&\quad + 2L_{H}^2L_{G}^2(L_{H}+1)^2\left(\beta_{k}^2 + 2\gamma_{k} \right)\|\yhat_{k}\|^2 + \left(1-\mu_{G}\beta_{k}\right)\|\yhat_{k}\|^2 + \beta_{k}^2\Gamma_{22}\notag\\ 
&\quad + 2L_{G}^2(L_{H}+1)^2\beta_{k}^2\|\yhat_{k}\|^2  +  \frac{L_{G}\beta_{k}}{\mu_{G}}\|\xhat_{k}\|^2 +  L_{G}^2(L_{H}+2)\beta_{k}^2\|\xhat_{k}\|^2\allowdisplaybreaks\notag\\
&\leq \|\zhat_{k}\|^2 -\mu_{G}\beta_{k}\|\yhat_{k}\|^2 - \mu_{F}\alpha_{k}\|\xhat_{k}\|^2 +  \frac{L_{G}\beta_{k}}{\mu_{G}}\|\xhat_{k}\|^2 \notag\\
&\quad +  2\beta_{k}^2\Gamma_{22} +  \alpha_{k}^2\Gamma_{11} + 4L_{G}^2\gamma_{k}\Gamma_{22}\notag\\
&\quad + 2L_{H}^2L_{G}^2(L_{H}+1)^{2}\left(\big(\frac{1}{L_{H}^2}+1\big)\beta_{k}^2 +   2\gamma_{k} \right)\|\yhat_{k}\|^2\notag\\
&\quad + L_{G}^2(L_{H}+1)^2\left(2\beta_{k}^2 + \frac{\alpha_{k}^2}{L_{G}^2} + 4\gamma_{k}\right)\|\xhat_{k}\|^2\notag\\
&\leq \|\zhat_{k}\|^2 -\mu_{G}\beta_{k}\|\yhat_{k}\|^2 - \frac{\mu_{F}}{2}\alpha_{k}\|\xhat_{k}\|^2  +  2\beta_{k}^2\Gamma_{22} +  \alpha_{k}^2\Gamma_{11} + 4L_{G}^2\gamma_{k}\Gamma_{22}\notag\\
&\quad + 2L_{G}^2(L_{H}+1)^2\left( \frac{\alpha_{k}^2}{2L_{G}^2}+ 2(L_{H}^2+1)(\gamma_{k}+\beta_{k})\right)\|\zhat_{k}\|^2.\label{lem:x+y_bound:Eq1}
\end{align}
where in the last inequality we use 
\begin{align*}
\frac{\beta_{k}}{\alpha_{k}} \leq \frac{2\mu_{F}\mu_{G}}{L_{G}}\cdot
\end{align*}
For convenience, we denote by $c_{k}$
\begin{align*}
c_{k} = 2L_{G}^2(L_{H}+1)^2\left( \frac{\alpha_{k}^2}{2L_{G}^2}+ 2(L_{H}^2+1)(\gamma_{k}+\beta_{k})\right)\|\zhat_{k}\|^2,
\end{align*}
and let $C_{2}$ be defined in \eqref{lem:x+y_bound:C}. Using the relation ${1+x\leq \exp(x)}$ for any $x\geq 0$ and since $\alpha_{k}$ and $\beta_{k}$ satisfy \eqref{lem:x+y_bound:stepsize} we have
\begin{align}
\prod_{t=k}^{\infty}\left(1+c_{t}\right) &\leq \exp\left(\sum_{t=0}^{\infty}c_{t}\right) \leq  \exp\left(C_{2}\right) < \infty.\label{lem:x+y_bound:Eq2}
\end{align}
Let $w_{k}$ be defined as 
\begin{align*}
w_{k} = \prod_{t = k}^{\infty}\left(1 + c_{t}\right)\Eset\left[\|\zhat_{k}\|^2\right]
\end{align*}
Taking the expectation both sides of \eqref{lem:x+y_bound:Eq1} and then multiplying by a finite number 
\[\prod_{t=k+1}^{\infty}\left(1 + c_{t}\right),\]
we obtain 
\begin{align*}
w_{k+1}
&\leq w_{k} +   \left(\beta_{k}^2\Gamma_{22} +  \alpha_{k}^2\Gamma_{11} + \beta_{k}^2\Gamma_{22} + 2\gamma_{k}\Gamma_{22}\right)\prod_{t=k+1}^{\infty}\left(1 + c_{t}\right)\notag\\
&\leq w_{k} +   \exp\left(C_{2}\right)\left(2\beta_{k}^2\Gamma_{22} +  \alpha_{k}^2\Gamma_{11} + 2\gamma_{k}\Gamma_{22}\right)
,    
\end{align*}
where we dropped the negative term on the right-hand side of \eqref{lem:x+y_bound:Eq1}. Summing up both sides of the preceding equation over $k = 0,\ldots,K$ for some $K>0$ yields
\begin{align}
w_{K+1}
&\leq w_{0} + \exp\left(C_{2}\right)\Big(2\Gamma_{22}\sum_{k=0}^{K}\beta_{k}^2 +  \Gamma_{11}\sum_{k=0}^{K}\alpha_{k}^2 + \Gamma_{22}\sum_{k=0}^{K}\gamma_{k}\Big)\notag\\
&\leq w_{0} + C_{1}\exp\left(C_{2}\right)\left(3\Gamma_{22} + \Gamma_{11}\right)\notag\\
&\leq C_{2}\Eset\left[\|\zhat_{0}\|^2\right] + C_{1}\exp\left(C_{2}\right)\left(3\Gamma_{22} + \Gamma_{11}\right). \label{lem:x+y_bound:Eq2b}    
\end{align}
Diving both sides of \eqref{lem:x+y_bound:Eq2b} by $\prod_{t=K+1}^{\infty}\left(1 +c_{t}\right)$ gives \eqref{lem:x+y_bound:Ineq}, i.e., 
\begin{align*}
\Eset\left[\|\zhat_{k+1}\|^2\right]
&\leq \frac{C_{2}\Eset\left[\|\zhat_{0}\|^2\right]}{\prod_{t=K+1}^{\infty}\left(1 +c_{t}\right)}  + \frac{ C_{1}\exp\left(C_{2}\right)\left(3\Gamma_{22} + \Gamma_{11}\right)}{\prod_{t=K+1}^{\infty}\left(1 +c_{t}\right)}\notag\\
&\leq \frac{ C_{2}\Eset\left[\|\zhat_{0}\|^2\right] }{\exp(-C_{2})} + \frac{ C_{1}\exp\left(C_{2}\right)\left(3\Gamma_{22} + \Gamma_{11}\right)}{\exp(-C_{2})}, 
\end{align*}
where the second inequality is due to the relation $1+x\geq \exp(-x)$, $\forall x\geq 0$ and the lower bound of the integral test
\begin{align*}
\prod_{t=K+1}^{\infty}\!\!\!\left(1 +c_{t}\right) \geq \exp\big\{-\sum_{t=K+1}^{\infty}c_{t}\big\}\geq \exp\left(-C_{2}\right).
\end{align*}

\section{Concluding Remarks}
In this paper, we studied nonlinear two-time-scale stochastic approximation, where our main contribution is to provide an explicit formula to characterize the finite-time performance of this method. We showed that the mean square error generated by this method converges at a rate $\Ocal(1/k^{2/3})$. Our analysis is mainly motivated by the classic control theory for singularly perturbed systems, where we utilize a Lyapunov function to characterize the the coupling between fast and slow variables. As mentioned, two-time-scale SA has broad applications in many areas, especially in reinforcement learning. A natural extension from our work is to study the performance of nonlinear two-time-scale SA under Markovian noise, which is often the case in reinforcement learning. Another interesting question is whether one can improve the rate $\Ocal(1/k^{2/3})$ to $\Ocal(1/k)$ as achieved in the case of linear two-time-scale SA. Such a question, however, is not trivial since the existing technique in the linear case may not be applicable to the nonlinear counterpart \cite{KondaT2004,Kaledin_two_time_SA2020}.

\bibliographystyle{IEEEtran}
\bibliography{refs}




\end{document}